\titleformat*{\section}{\Large\bfseries}
\titleformat*{\subsection}{\large\sc}
\titleformat*{\subsubsection}{\itshape}
\begin{document}

\title{{\bf On the tractability of Nash equilibrium}}

\author{\large{Ioannis Avramopoulos}}

\date{}

\maketitle

\thispagestyle{empty} 

\newtheorem{definition}{Definition}
\newtheorem{proposition}{Proposition}
\newtheorem{theorem}{Theorem}
\newtheorem*{theorem*}{Theorem}
\newtheorem{corollary}{Corollary}
\newtheorem{lemma}{Lemma}
\newtheorem{axiom}{Axiom}
\newtheorem{thesis}{Thesis}

\vspace*{-0.2truecm}

\begin{abstract}
In this paper, we propose a method for solving a PPAD-complete problem \citep{PPAD}. Given is the payoff matrix $C$ of a symmetric bimatrix game $(C, C^T)$ and our goal is to compute a Nash equilibrium of $(C, C^T)$. In this paper, we devise a nonlinear replicator dynamic (whose right-hand-side can be obtained by solving a pair of convex optimization problems) with the following property: Under any invertible $0 < C \leq 1$, every orbit of our dynamic starting at an interior strategy of the standard simplex approaches a set of strategies of $(C, C^T)$ such that, for each strategy in this set, a symmetric Nash equilibrium strategy can be computed by solving the aforementioned convex mathematical programs. We prove convergence using results in analysis (the analytic implicit function theorem), nonlinear optimization theory (duality theory, Berge's maximum principle, and a theorem of \cite{Robinson} on the Lipschitz continuity of parametric nonlinear programs), and dynamical systems theory (such as the LaSalle invariance principle).
\end{abstract}

{\bf Keywords:} Game theory, Nash equilibrium, Bimatrix Games, Symmetric Bimatrix Games, Dynamical Systems, Evolutionary Dynamics, Replicator Dynamics.

\section{Introduction}

Game theory is a mathematical discipline concerned with the study of algebraic, analytic, and other objects that abstract the physical world, especially social interactions. The most important solution concept in game theory is the Nash equilibrium \citep{Nash}, a strategy profile (combination of strategies) in an $N$-player game such that no unilateral player deviations are profitable. The Nash equilibrium is an attractive solution concept, for example, as Nash showed, an equilibrium is guaranteed to exist in any $N$-person game. Over time this concept has formed a basic cornerstone of {\em economic theory,} but its reach extends beyond economics to the natural sciences and biology. 

In this paper, we propose a method, as an {\em evolutionary dynamic} \citep{PopulationGames}, to compute a symmetric Nash equilibrium in a symmetric bimatrix game. This problem is PPAD-complete as follows from \citep{CDT, Daskalakis} and a standard (folklore) symmetrization result of bimatrix games. We assume the payoff matrix to be invertible but this is only a mild assumption as singular matrices can be approximated arbitrarily well by invertible matrices. We do not attempt to ground the intuition of our equilibrium computation method from the perspective of economic theory. For example, proposing a microfoundation for our dynamic is beyond the scope of this paper. Methods for discretizing our equation are also out scope of our paper. We note that standard methods (such as the Euler or higher order methods such as Runge Kutta) certainly apply.

\subsection{Preliminaries in game theory}

Given a symmetric bimatrix game $(C, C^T)$, we denote the corresponding standard (probability) simplex by $\mathbb{X}(C)$. $\mathbb{\mathring{X}}(C)$ denotes the relative interior of $\mathbb{X}(C)$. The elements (probability vectors) of $\mathbb{X}(C)$ are called {\em strategies}. We call the standard basis vectors in $\mathbb{R}^n$ {\em pure strategies} and denote them by $E_i, i =1, \ldots, n$. {\em Symmetric Nash equilibria} are precisely those combinations of strategies $(X^*, X^*)$ such that $X^*$ satisfies $\forall Y \in \mathbb{X}(C) : X^* \cdot CX^* - Y \cdot CX^* \geq 0$. We call $X^*$ a {\em symmetric Nash equilibrium strategy}. A symmetric Nash equilibrium is guaranteed to always exist \citep{Nash2}. A simple fact is that $X^*$ is a symmetric Nash equilibrium strategy if and only if $(CX^*)_{\max} - X^* \cdot CX^* = 0$. If $(CX^*)_{\max} - X^* \cdot CX^* \leq \epsilon$, $X^*$ is called an $\epsilon$-approximate equilibrium strategy. A symmetric bimatrix game $(C, C^T)$ is zero-sum if $C$ is antisymmetric, that is, $C = -C^T$, and it is called doubly symmetric if $C$ is symmetric, that is, $C = C^T$. In this paper, we are concerned with a general $C$.

\begin{definition}
\label{fixed_point}
$X$ is a fixed point of $(C, C^T)$, if, for $i \in \{1, \ldots, n\}$, $X(i) > 0 \Rightarrow (CX)_i = X \cdot CX$. 
\end{definition}

\noindent
Note that pure strategies and symmetric Nash equilibria are necessarily fixed points of $(C, C^T)$.

\subsection{Our techniques and main result}

A plausible approach to compute a Nash equilibrium in this setting is to use the replicator dynamic
\begin{align*}
\dot{X}(i) = X(i) ((CX)_i - X \cdot CX), \quad i=1,\ldots,n.
\end{align*}
In this paper, we consider the more general dynamic
\begin{align*}
\dot{X}(i) = X(i) ((CZ)_i - X \cdot CZ), \quad i=1,\ldots,n,
\end{align*}
where $Z \in \mathbb{X}(C)$. We call parameter $Z$ the {\em multiplier} at $X$. Our main result has as follows:

\begin{theorem}
\label{main_theorem}
Let $0 < C \leq 1$ be the payoff matrix of a symmetric bimatrix game $(C, C^T)$ and let $S \equiv CC^T$. Furthermore, let $RE(\cdot, \cdot)$ denote the relative entropy function. If $C$ is invertible, starting at any $X^0 \in \mathbb{\mathring{X}}(C)$, every limit point of the orbit $X_t, t \in [0, +\infty)$ of
\begin{align}
\dot{X}(i) = X(i) \left( (CZ_X)_i - X \cdot CZ_X \right)\label{principal_equation}
\end{align}
as $t \rightarrow \infty$, is a strategy, say $X$, whose multiplier $Z_X$ is a symmetric Nash equilibrium strategy of $(C, C^T)$. For any $X$ in the orbit, $Z_X \in \mathbb{X}(C)$ is the unique minimizer of the convex optimization problem
\begin{align}
\mbox{minimize} \quad &RE(Z, X) + \frac{1}{2} \epsilon^2 + \frac{1}{2} \eta^2\label{first_expression}\\
\mbox{subject to} \quad &(X \cdot SX)^{-1} \sum_{i=1}^n X(i) (SX)_i (CZ)_i - X \cdot CZ \geq \epsilon \theta^2_X\label{first_constraint}\\
&(CZ)_{\max} - X \cdot CZ \leq \epsilon\label{second_constraint}\\
&\ln \left( \sum_{i=1}^n X(i) \exp\{\alpha (CZ)_i\} \right) - \alpha X \cdot CZ \leq \eta^{\theta_X} - 1\label{third_constraint}\\
&\ln \left( \sum_{i=1}^n X(i) \exp\{\alpha (SZ)_i\} \right) - \alpha X \cdot SZ \leq \eta^{\theta_X} - 1\label{fourth_constraint}\\
&\eta \geq 1, \quad Z \in \mathbb{X}(C),\notag
\end{align}
where $\alpha > 0$ is a constant and $\theta_X \geq 0$ is the unique optimizer of the also convex optimization problem
\begin{align}
\mbox{minimize} \quad &RE(Z, X) - \theta\label{optproblem}\\
\mbox{subject to} \quad &(X \cdot SX)^{-1} \sum_{i=1}^n X(i) (SX)_i (CZ)_i - X \cdot CZ \geq \theta^2\label{optproblemconstraint}\\
\quad &Z \in \mathbb{X}(C).\notag
\end{align}
\end{theorem}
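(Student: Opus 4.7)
My approach is to treat the optimal value
\[
V(X) \;:=\; RE(Z_X, X) + \tfrac12\epsilon(X)^2 + \tfrac12\eta(X)^2
\]
of the main convex program \eqref{first_expression}--\eqref{fourth_constraint} as a Lyapunov-like function along the orbit $X_t$ and then invoke the LaSalle invariance principle to identify the limit set.

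First I would establish well-posedness and regularity. The auxiliary problem \eqref{optproblem}--\eqref{optproblemconstraint} is feasible at every $X \in \mathbb{\mathring{X}}(C)$ (the choice $Z=X,\theta=0$ satisfies \eqref{optproblemconstraint} with equality), its feasible set is convex and bounded, and the objective is strictly convex in $\theta$ once \eqref{optproblemconstraint} is active, so $\theta_X \geq 0$ is unique. Strict convexity of $RE(\cdot,X)$ in $Z$ and of $\tfrac12\epsilon^2+\tfrac12\eta^2$ in $(\epsilon,\eta)$, together with feasibility of $Z=X$ for a sufficiently large pair $(\epsilon,\eta)$, give uniqueness of the minimizer of the main program. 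Berge's maximum principle yields continuity of $Z_X$ in $X$, Robinson's theorem upgrades this to local Lipschitz continuity, and the analytic implicit function theorem provides the smoothness needed for the ODE \eqref{principal_equation} to define a flow on $\mathbb{\mathring{X}}(C)$. Forward invariance of $\mathbb{\mathring{X}}(C)$ follows from the $X(i)$ prefactor and $\sum_i \dot X(i)=0$.

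The heart of the argument is to show $\dot V(X)\leq 0$ along orbits, with equality characterizing the Nash set. Using the envelope theorem for parametric convex programs applied at the optimal Lagrangian, the derivative reduces to contributions from the $X$-dependence of the objective and constraints. The plain replicator identity $\tfrac{d}{dt}RE(Z,X)\bigl|_{Z\text{ fixed}} = (X-Z)\cdot CZ$ combined with \eqref{second_constraint} gives $(X-Z_X)\cdot CZ_X \leq \epsilon(X)$, and the $\epsilon\theta_X^2$-coupling in \eqref{first_constraint} is expected to yield an estimate of the form $\dot V(X)\leq -c\,\theta_X^2$ for some $c>0$. The LaSalle invariance principle then forces every limit point $X^\infty$ into $\{X:\theta_X=0\}$, where the auxiliary problem collapses and \eqref{first_constraint} reduces to a nonnegativity condition. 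Using the Jensen-type constraints \eqref{third_constraint}--\eqref{fourth_constraint} (whose left-hand sides are nonnegative and vanish iff $(CZ)_i$ and $(SZ)_i$ are constant on the support of $X$), together with the tightness of \eqref{second_constraint} driven by the minimization of $\epsilon^2$, I would conclude that $(CZ_{X^\infty})_i = X^\infty\cdot CZ_{X^\infty}$ on the support of $X^\infty$ and that $(CZ_{X^\infty})_{\max}=X^\infty\cdot CZ_{X^\infty}$; these two equalities together imply $Z_{X^\infty}\cdot CZ_{X^\infty}=(CZ_{X^\infty})_{\max}$, i.e., $Z_{X^\infty}$ is a symmetric Nash equilibrium strategy.

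The principal obstacle is the Lyapunov descent step. The value function $V$ is a priori only locally Lipschitz, so computing $\dot V$ requires either a nonsmooth envelope calculus or a careful measurable selection of Lagrange multipliers; more importantly, extracting a strict negative sign away from the Nash set demands that the somewhat opaque $\epsilon\theta_X^2$-coupling in \eqref{first_constraint} interact correctly with the replicator descent of $RE(Z_X,X)$ and with the slacks in \eqref{third_constraint}--\eqref{fourth_constraint}. This is exactly where Robinson's Lipschitz theorem for parametric nonlinear programs and the analytic implicit function theorem cited in the abstract become indispensable; any naive chain-rule argument that ignores the constraint multipliers will fail to produce the required monotonicity.
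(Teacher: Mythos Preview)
Your Lyapunov candidate is the wrong one, and this is a genuine gap rather than a stylistic difference. The optimization problem \eqref{first_expression}--\eqref{fourth_constraint} is engineered so that the \emph{quadratic form} $X\cdot SX$ is monotone along orbits, not so that the optimal value $V(X)$ is. Indeed, differentiating along \eqref{principal_equation} gives
\[
\frac{d}{dt}\,X\cdot SX \;=\; 2\!\left(\sum_{i=1}^n X(i)(SX)_i(CZ_X)_i - (X\cdot SX)(X\cdot CZ_X)\right),
\]
and constraint \eqref{first_constraint}, evaluated at the optimizer $(Z_X,\epsilon_X)$ with $\epsilon_X\geq 0$ forced by \eqref{second_constraint}, makes the right-hand side nonnegative. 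This is the entire purpose of \eqref{first_constraint}; LaSalle is then applied to $\mathsf{C}-X\cdot SX$. Your envelope-theorem computation of $\dot V$ would have to track the $X$-derivatives of every constraint against unknown multipliers, and there is no mechanism in the problem that would produce the sign you want; the inequality $\dot V\leq -c\,\theta_X^2$ you ``expect'' is unsupported, and you correctly flag it as the principal obstacle---it is in fact fatal to this route.

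A second issue is your characterization of the limit set. You aim to force every limit point into $\{\theta_X=0\}$, but the paper does not establish this and does not need it. Once $\tfrac{d}{dt}(X\cdot SX)\to 0$, the analysis bifurcates: at limit points with $\theta_X>0$, constraint \eqref{first_constraint} forces $\epsilon_X=0$, and then \eqref{second_constraint} gives $(CZ_X)_{\max}=X\cdot CZ_X$, which together with $\mathcal{C}(Z_X)\subset\mathcal{C}(X)$ (from finiteness of $RE(Z_X,X)$) yields that $Z_X$ is Nash. At limit points with $\theta_X=0$, the right-hand sides of \eqref{third_constraint}--\eqref{fourth_constraint} become $\eta^0-1=0$, so those constraints force $X$ to be a fixed point of both $(C,C^T)$ and $(S,S)$; invertibility of $C$ then makes the fixed points of $(S,S)$ isolated, pinning down the limit set, and a separate relative-entropy argument against a pure strategy rules out $(CZ_X)_{\max}>X\cdot CZ_X$. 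Your sketch collapses these two cases into one and omits the isolated-fixed-point step entirely, which is where the hypothesis ``$C$ invertible'' is actually used.
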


\subsection{Other related work}

Our dynamic is a replicator dynamic, and, therefore, it is related to the multiplicative weights update method \citep{AHK}. The replicator dynamic and the multiplicative weights method have various applications in theoretical computer science, artificial intelligence, and game theory. On the latter front, there is significant amount of work on Nash equilibrium computation and especially in the setting of $2$-player games. We simply mention a boundary of those results. The Lemke-Howson algorithm for computing an equilibrium in a bimatrix game is considered by many to be the state of the art in exact equilibrium computation but it has been shown to run in exponential time in the worst case \citep{Savani-Stengel}. There is a quasi-polynomial algorithm for additively approximate Nash equilibria in bimatrix games due to \cite{LMM} (based on looking for equilibria of small support on a grid). \cite{AvramopoulosBoosting} give a constructive existence proof of Nash equilibrium using the technique of multipliers (as we do in this paper). We note that the idea of using multipliers, in particular, in the related to this paper exponential multiplicative weights method, has independently been explored in \citep{Clairvoyant, AvramopoulosSSRN}. We also note that our method circumvents a tough impossibility result in game dynamics of \cite{Milionis}. Insofar, the best polynomial-time approximation algorithm for a Nash equilibrium achieves a 0.3393 approximation \citep{Tsaknakis-Spirakis-journal}.\\

\noindent
Sections \ref{preliminaries2} and \ref{preliminaries3} set the stage and Section \ref{proof_of_theorem} proves our main result (Theorem \ref{main_theorem}).

\section{On optimization problem \eqref{optproblem}}
\label{preliminaries2}

Let us first briefly mention some properties of the relative entropy function. The relative entropy between the $n \times 1$ probability vectors $P > 0$ (that is, for all $i = 1, \ldots, n$, $P(i) > 0$) and $Q > 0$ is given by 
\begin{align*}
RE(P, Q) \equiv \sum_{i=1}^n P(i) \ln \frac{P(i)}{Q(i)}.
\end{align*}
However, this definition can be relaxed: Let $\mathcal{C}(\cdot)$ denote the {\em carrier} of its argument (that is always a probability vector), where the carrier is the set of indices with positive probability mass. The relative entropy between $n \times 1$ probability vectors $P$ and $Q$ where $Q \in \{ \mathcal{Q} \in \mathbb{X} | \mathcal{C}(P) \subset \mathcal{C}(\mathcal{Q}) \}$ and $\mathbb{X}$ is a probability simplex of appropriate dimension, is
\begin{align*}
RE(P, Q) \equiv \sum_{i \in \mathcal{C}(P)} P(i) \ln \frac{P(i)}{Q(i)}.
\end{align*}
We note the well-known properties of the relative entropy \cite[p.96]{Weibull} that {\em (i)} $RE(P, Q) \geq 0$, {\em (ii)} $RE(P, Q) \geq \| P - Q \|^2$, where $\| \cdot \|$ is the Euclidean distance, {\em (iii)} $RE(P, P) = 0$, and {\em (iv)} $RE(P, Q) = 0$ iff $P = Q$. Note {\em (i)} follows from {\em (ii)} and {\em (iv)} follows from {\em (ii)} and {\em (iii)}. Note finally that
\begin{align*}
\mathcal{C}(P) \not\subset \mathcal{C}(\mathcal{Q}) \Rightarrow RE(P, Q) = \infty.
\end{align*}

\begin{lemma}
\label{fundamental_lemma}
The following are properties of optimization problem \eqref{optproblem}:\\

\noindent
(i) $\forall X \in \mathbb{\mathring{X}}(C)$, \eqref{optproblem} is feasible.\\

\noindent
(ii) On the boundary of $\mathbb{X}(C)$, \eqref{optproblem} is not guaranteed to be feasible.\\

\noindent
(iii) Wherever \eqref{optproblem} is feasible, $\theta_X \geq 0$ and $\theta_X$ is unique.\\

\noindent
(iv) $\theta_X > 0$ implies that $\theta_X$ is locally an analytic function of $X$.\\

\noindent
(v) Wherever \eqref{optproblem} is feasible, the objective is finite.
\end{lemma}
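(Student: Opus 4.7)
For (i), I will produce a feasible pair explicitly. Writing the left-hand side of \eqref{optproblemconstraint} as $(\tilde X - X)\cdot CZ$ with $\tilde X(i) := X(i)(SX)_i/(X\cdot SX)$ (well-defined on $\mathbb{\mathring{X}}(C)$ because invertibility of $C$ gives $X\cdot SX = \|C^T X\|^2 > 0$), I take $Z^* := C^T X / \|C^T X\|_1$, which lies in $\mathbb{\mathring{X}}(C)$ because $C > 0$. Then $CZ^* = SX/\|C^T X\|_1$, and a direct expansion yields
\begin{align*}
(\tilde X - X)\cdot SX \;=\; \frac{\sum_i X(i)(SX)_i^2 - (X\cdot SX)^2}{X\cdot SX} \;=\; \frac{\mathrm{Var}_X(SX)}{X\cdot SX} \;\geq\; 0,
\end{align*}
so the pair $(Z^*, \theta^* := \sqrt{(\tilde X - X)\cdot CZ^*})$ is feasible. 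For (ii), I will exhibit a boundary counterexample: at $X = (\tfrac{1}{2}, \tfrac{1}{2}, 0)$ the finiteness of $RE$ forces $\mathcal{C}(Z) \subseteq \mathcal{C}(X)$, so $Z(3)=0$ and the constraint reduces to $a_1\bigl[(CZ)_1-(CZ)_2\bigr]\geq \theta^2$ with $a_1 = \tilde X(1) - X(1)$. Choosing rows 1, 2 of $C$ with $C_{11}<C_{21}$ and $C_{12}<C_{22}$, but $C_{13}$ large enough to force $(SX)_1>(SX)_2$ and so $a_1>0$, makes the bracketed quantity strictly negative for every admissible $Z$; even $\theta=0$ is then infeasible, and an invertible $C$ with $0<C\leq 1$ of this form is straightforward to exhibit.

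For (iii), I eliminate $\theta$ by noting that at any optimum $\theta$ is driven to $\sqrt{g_X(Z)}$, where $g_X(Z) := (\tilde X - X)\cdot CZ$; this forces $\theta_X \geq 0$ and reduces the problem to minimizing $RE(Z, X) - \sqrt{g_X(Z)}$ over $\{Z \in \mathbb{X}(C) : g_X(Z)\geq 0\}$. Since $g_X$ is affine, $-\sqrt{g_X(\cdot)}$ is convex on this set, and its sum with the strictly convex $RE(\cdot, X)$ is strictly convex, so $Z$ and hence $\theta_X$ are unique. For (iv), when $\theta_X > 0$ the quadratic inequality is active with positive multiplier $\lambda = 1/(2\theta_X)$, and the KKT system
\begin{align*}
\ln Z(i) - \ln X(i) + 1 - \lambda\bigl[C^T(\tilde X - X)\bigr]_i + \mu \;=\; 0, \qquad {\textstyle\sum_i Z(i) = 1}, \qquad g_X(Z) = \theta^2,
\end{align*}
is real-analytic jointly in $(X, Z, \theta, \mu)$. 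Strict convexity of the reduced problem together with strict complementarity at the active inequality make the Jacobian in $(Z,\theta,\mu)$ nonsingular, so the analytic implicit function theorem furnishes a local analytic branch for $\theta_X$ as a function of $X$.

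For (v), any feasible $(Z,\theta)$ obeys $0 \leq \theta \leq \sqrt{g_X(Z)}$ with $g_X(Z)$ uniformly bounded (since $0<C\leq 1$ and $\|\tilde X - X\|_1 \leq 2$), so $RE(Z,X)-\theta$ is bounded below by a finite constant; continuity on the nonempty closed bounded feasible slice inside the face supporting $X$ then delivers a finite attained minimum. The step I anticipate being most delicate is (iv): verifying invertibility of the KKT Jacobian requires careful bookkeeping of which $Z(i)\geq 0$ constraints may be active at the optimizer along with the reduced second-order sufficient condition that accounts for the quadratic inequality, rather than a black-box appeal to standard KKT theory.
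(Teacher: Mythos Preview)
Your proof is correct but proceeds by a genuinely different route than the paper. The paper attacks the problem through Lagrangian duality: it derives the dual function explicitly, obtains a closed-form exponential-weights expression for the optimal $Z_X$ in terms of a scalar dual variable $\alpha$, and reduces everything to a single scalar equation in $\alpha$ whose left-hand side is shown (via a Jensen argument) to be strictly monotone. Feasibility, uniqueness of $\theta_X = 1/(2\alpha)$, and analyticity via the implicit function theorem all flow from this one scalar equation, and (v) follows because the closed-form $Z_X$ is manifestly supported on the carrier of $X$. The same variance computation you use for (i) appears in the paper, but only to certify the sign of the $\alpha\to\infty$ limit of that scalar equation.

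By contrast, you stay on the primal side throughout: you eliminate $\theta$ to reduce to minimizing $RE(Z,X) - \sqrt{g_X(Z)}$, invoke strict convexity for (iii), and set up the full KKT system for (iv). This is perfectly valid, and your self-flagged concern about (iv) is well placed but ultimately resolvable: the entropic barrier forces $Z_X(i)>0$ on the carrier of $X$, so no nonnegativity constraints are active there; with $\theta_X>0$ the active-constraint gradients $(\mathbf{1},0)$ and $(\nabla_Z g_X, -2\theta_X)$ are automatically linearly independent, and the Lagrangian Hessian $\mathrm{diag}(1/Z(i)) \oplus 2\lambda$ is positive definite, so the bordered KKT Jacobian is nonsingular. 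What the paper's dual route buys is that this bookkeeping collapses to a single scalar derivative, and the explicit exponential form of $Z_X$ is reused downstream; what your primal route buys is a shorter path that avoids duality theory altogether. For (ii) you go further than the paper, which merely asserts that infeasibility on the boundary ``cannot be eliminated,'' whereas you actually construct an example.
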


\begin{proof}
Our proof of this lemma rests on the dual formulation of \eqref{optproblem}. Taking the dual of \eqref{optproblem}, the Lagrangian is
\begin{align}
\mathcal{L}(Z, \theta, \alpha, \lambda) = RE(Z, X) - \theta - \alpha \left( (X \cdot SX)^{-1} \sum_{i=1}^n X(i) (SX)_i (CZ)_i - X \cdot CZ - \theta^2 \right) + \lambda (\mathbf{1}^T Z - 1),\label{langrangian}
\end{align}
where we assume that the constraints $Z \geq 0$ and $\theta \geq 0$ are implicit. The dual function $\mathcal{L}_D(\alpha, \lambda)$ is obtained by minimizing the Lagrangian $\mathcal{L}(Z, \theta, \alpha, \lambda)$, that is,
\begin{align*}
\mathcal{L}_D(\alpha, \lambda) = \inf \{ \mathcal{L}(Z, \theta, \alpha, \lambda) | Z \in \mathbb{R}^n, \theta \in \mathbb{R} \}.
\end{align*}
The Lagrangian is minimized when the gradient is zero. Let us first set the partial derivative with respect to $\theta$ equal to zero to obtain
\begin{align}
1 - 2 \alpha \theta = 0 \Rightarrow \theta = \frac{1}{2\alpha},\label{some}
\end{align}
which indeed satisfies $\theta \geq 0$ since $\alpha \geq 0$ (by the definition of the dual function), but \eqref{some} further implies that $\alpha > 0$ and, therefore, $\theta > 0$ unless $\alpha = \infty$. Let us now consider the partial derivative with respect to $Z$. Observing to that end that
\begin{align*}
\frac{\partial RE(Z, X)}{\partial Z(i)} = \frac{\partial }{\partial Z(i)} \left( Z(i) \ln \frac{Z(i)}{X(i)} \right) = 1 + \ln \frac{Z(i)}{X(i)}
\end{align*}
we obtain that the gradient is zero when
\begin{align*}
1 + \ln \frac{Z(i)}{X(i)} - \alpha \left( (X \cdot SX)^{-1} \sum_{j=1}^n C^T_{ij} X(j) (SX)_j - (C^TX)_i \right) + \lambda = 0.
\end{align*}
Solving for $Z(i)$ in the previous expression, we obtain
\begin{align}
Z(i) = X(i) \exp\left\{ - 1 - \lambda + \alpha \left( (X \cdot SX)^{-1} \sum_{j=1}^n C^T_{ij} X(j) (SX)_j - (C^TX)_i \right)  \right\} \quad i = 1, \ldots, n.\label{firm}
\end{align}
Note that, by the previous expression, the constraint $Z \geq 0$ is automatically satisfied. Using the notation
\begin{align*}
\mathcal{B}_i \equiv (X \cdot SX)^{-1} \sum_{j=1}^n C^T_{ij} X(j) (SX)_j - (C^TX)_i, \quad i =1, \ldots, n,
\end{align*}
noting that
\begin{align*}
(X \cdot SX)^{-1} \sum_{i=1}^n X(i) (SX)_i (CZ)_i - X \cdot CZ = \sum_{i=1}^n Z(i) \mathcal{B}_i,
\end{align*}
and substituting now the previous expressions for $Z$ and $\theta$ in \eqref{langrangian}, we obtain the dual function
\begin{align*}
\mathcal{L}_D(\alpha, \lambda) = &\sum_{i=1}^n X(i) \exp\{ - 1 - \lambda + \alpha \mathcal{B}_i \} (- 1 - \lambda + \alpha \mathcal{B}_i) -\\
  &- \sum_{i=1}^n X(i) \exp\{ - 1 - \lambda + \alpha \mathcal{B}_i  \} \alpha \mathcal{B}_i + \\
  &+ \lambda \left(\sum_{i=1}^n X(i) \exp\{ - 1 - \lambda + \alpha \mathcal{B}_i \} - 1\right) +\\
  &- \frac{1}{2\alpha} + \alpha \left( \frac{1}{2\alpha} \right)^2
\end{align*}
which simplifies to
\begin{align*}
\mathcal{L}_D(\alpha, \lambda) = &- \sum_{i=1}^n X(i) \exp\{ - 1 - \lambda + \alpha \mathcal{B}_i \} -\lambda - \frac{1}{4 \alpha}.
\end{align*}
This function is concave in $\alpha$ and $\lambda$. Since the dual function is concave, to find the optimal $\alpha$ and $\lambda$ we simply need to set the derivatives of $\mathcal{L}_D(\alpha, \lambda)$ (with respect to $\alpha$ and $\lambda$) equal to $0$ (by the previous observation that $\alpha$ is necessarily strictly positive). To that end, we have
\begin{align*}
\frac{d \mathcal{L}_D(\alpha, \lambda)}{d \lambda} = \sum_{i=1}^n X(i) \exp\{ - 1 - \lambda + \alpha\mathcal{B} _i \} - 1 = 0,
\end{align*}
which implies
\begin{align*}
\exp\{ - 1 - \lambda \} = \frac{1}{\sum_{i=1}^n X(i) \exp\{ \alpha \mathcal{B}_i \}}.
\end{align*}
Substituting in \eqref{firm} we obtain (by the zero duality gap since the problem is convex) the optimal $Z_X$ as
\begin{align}
Z_X(i) = X(i) \frac{ \exp\{\alpha \mathcal{B}_i\}}{\sum_{i=1}^n X(i) \exp\{ \alpha \mathcal{B}_i \}} \quad i=1, \ldots, n.\label{optimalZ}
\end{align}
Taking now the derivative of $\mathcal{L}_D(\alpha, \lambda)$ with respect to $\alpha$ and setting it equal to zero, we obtain that
\begin{align}
- \sum_{i=1}^n X(i) \mathcal{B}_i \frac{\exp\{ \alpha \mathcal{B}_i \}}{\sum_{i=1}^n X(i) \exp\{ \alpha \mathcal{B}_i \}} + \frac{1}{4 \alpha^2} = 0\label{fixpoint}
\end{align}
\eqref{fixpoint} defines an implicit function. Note now that, by straight calculus and Jensen's inequality, we have
\begin{align*}
\frac{d}{d\alpha} \left(\sum_{i=1}^n X(i) \mathcal{B}_i \frac{\exp\{ \alpha \mathcal{B}_i \}}{\sum_{i=1}^n X(i) \exp\{ \alpha \mathcal{B}_i \}} \right) \geq 0.
\end{align*}
Let us show these steps: We have
\begin{align*}
\frac{d}{d\alpha} \left(\frac{\sum_{i=1}^n X(i) \mathcal{B}_i \exp\{ \alpha \mathcal{B}_i \}}{\sum_{i=1}^n X(i) \exp\{ \alpha \mathcal{B}_i \}} \right) &= \frac{\left( \sum_{i=1}^n X(i) \mathcal{B}^2_i \exp\{ \alpha \mathcal{B}_i \} \right) \left( \sum_{i=1}^n X(i) \exp\{ \alpha \mathcal{B}_i \} \right)}{\left( \sum_{i=1}^n X(i) \exp\{ \alpha \mathcal{B}_i \} \right)^2} -\\
&- \frac{\left( \sum_{i=1}^n X(i) \mathcal{B}_i \exp\{ \alpha \mathcal{B}_i \} \right)^2}{\left( \sum_{i=1}^n X(i) \exp\{ \alpha \mathcal{B}_i \} \right)^2}
\end{align*}
which is positive since, by Jensen's inequality, we have
\begin{align*}
\frac{\sum_{i=1}^n X(i) \mathcal{B}^2_i \exp\{ \alpha \mathcal{B}_i \}}{\sum_{i=1}^n X(i) \exp\{ \alpha \mathcal{B}_i \}} \geq \left( \frac{\sum_{i=1}^n X(i) \mathcal{B}_i \exp\{ \alpha \mathcal{B}_i \}}{\sum_{i=1}^n X(i) \exp\{ \alpha \mathcal{B}_i \}} \right)^2
\end{align*}
which implies that
\begin{align*}
\left( \sum_{i=1}^n X(i) \mathcal{B}^2_i \exp\{ \alpha \mathcal{B}_i \} \right) \left( \sum_{i=1}^n X(i) \exp\{ \alpha \mathcal{B}_i \} \right) \geq \left( \sum_{i=1}^n X(i) \mathcal{B}_i \exp\{ \alpha \mathcal{B}_i \} \right)^2.
\end{align*}
Therefore, the derivative (Jacobian) of \eqref{fixpoint} with respect to $\alpha$ is always $< 0$ provided $\alpha$ is finite.\\

\noindent
Let us now show that
\begin{align}
\lim_{\alpha \rightarrow \infty} \left\{ \frac{\sum_{i=1}^n X(i) \mathcal{B}_i \exp\{ \alpha \mathcal{B}_i \}}{\sum_{i=1}^n X(i) \exp\{ \alpha \mathcal{B}_i \}} \right\} = \max_{Z \in \mathbb{X}(C_X)} \left\{ (X \cdot SX)^{-1} \sum_{i=1}^n X(i) (SX)_i (CZ)_i - X \cdot CZ \right\},\label{hyper}
\end{align}
where $\mathbb{X}(C_X)$ is the carrier of $X$. To that end, note first that, by \eqref{optimalZ}, we have
\begin{align*}
\frac{\sum_{i=1}^n X(i) \mathcal{B}_i \exp\{ \alpha \mathcal{B}_i \}}{\sum_{i=1}^n X(i) \exp\{ \alpha \mathcal{B}_i \}} = \sum_{i=1}^n Z_X(i) \mathcal{B}_i.
\end{align*}
Furthermore, we may write \eqref{optimalZ} as
\begin{align*}
Z_X(i) = X(i) \frac{1}{\sum_{j=1}^n X(j) \exp\{ \alpha (\mathcal{B}_j - \mathcal{B}_i) \}} \quad i=1, \ldots, n
\end{align*}
Taking the limit as $\alpha \rightarrow \infty$, we obtain that, in the carrier of $X$,
\begin{align*}
\mathcal{B}_i < \mathcal{B}_{\max} \Rightarrow Z_X(i) = 0
\end{align*}
which implies that, in the carrier of $X$,
\begin{align*}
Z_X(i) > 0 \Rightarrow \mathcal{B}_i = \mathcal{B}_{\max}.
\end{align*}
Therefore, by straight algebra using the definition of $\mathcal{B}_i, i =1,\ldots, n$, the optimal $Z_X$ is obtained at $\alpha = \infty$ only if 
\begin{align}
Z_X \in &\arg\max\left\{ \sum_{i=1}^n Z(i) \mathcal{B}_i \bigg| Z \in \mathbb{X}(C_X) \right\} =\notag\\
&= \arg\max \left\{ (X \cdot SX)^{-1} \sum_{i=1}^n X(i) (SX)_i (CZ)_i - X \cdot CZ \bigg| Z \in \mathbb{X}(C_X) \right\}\label{endum}.
\end{align}
This proves \eqref{hyper}.\\

\noindent
Note, further, that
\begin{align}
\max \left\{ (X \cdot SX)^{-1} \sum_{i=1}^n X(i) (SX)_i (CZ)_i - X \cdot CZ \bigg| Z \in \mathbb{X}(C) \right\} \geq 0\label{max_inequality}
\end{align}
because, letting
\begin{align*}
Z = \left( \sum_{i=1}^n (C^TX)_i \right) C^TX,
\end{align*}
we obtain, by Jensen's inequality, that the expression
\begin{align*}
(X \cdot SX)^{-1} \sum_{i=1}^n X(i) (SX)_i (SX)_i - X \cdot SX
\end{align*}
is strictly positive unless $X$ is a fixed point of $(S, S)$.\\

\noindent
We are now ready to prove the lemma. Let us first assume that $X \in \mathbb{\mathring{X}}(C)$. Observe that when $\alpha = 0$, the left-hand-side of \eqref{fixpoint} is $+\infty$, and, when $\alpha = +\infty$, the left-hand-side of \eqref{fixpoint} is $\leq 0$ (as follows from \eqref{hyper} and \eqref{max_inequality}). Since the the left-hand-side of \eqref{fixpoint} is a strictly monotonically decreasing function of $\alpha$, we obtain that, given any $X \in \mathbb{X}(C)$, there exists a (unique) $\alpha$ that satisfies \eqref{fixpoint}. This proves (i). (ii) is implied by \eqref{hyper} since we cannot eliminate the possibility that there may exist an $X$ on the boundary of $\mathbb{X}(C)$ such that
\begin{align*}
\max_{Z \in \mathbb{X}(C_X)} \left\{ (X \cdot SX)^{-1} \sum_{i=1}^n X(i) (SX)_i (CZ)_i - X \cdot CZ \right\} < 0
\end{align*}
in which case there is no $\alpha > 0$ that satisfies \eqref{fixpoint}. (iii) is implied by \eqref{some}, that $\alpha \geq 0$ by the definition of the dual function, and by the previous observation that the derivative (Jacobian) of \eqref{fixpoint} with respect to $\alpha$ is always $< 0$ provided $\alpha$ is finite. Finally, (iv) is implied by the same observation that the derivative (Jacobian) of \eqref{fixpoint} with respect to $\alpha$ is always $< 0$ provided $\alpha$ is finite and the $\mathcal{C}^{\infty}$ (analytic) implicit function theorem. Finally, to prove (v), note that the optimal $Z_X$ is on the carrier of $X$ if $\alpha > 0$ is finite (see \eqref{optimalZ}). That $Z_X$ is on the carrier of $X$ if $\alpha = \infty$ is implied by \eqref{endum}. Therefore, (v) holds necessarily. This completes the proof of the lemma.
\end{proof}

\section{On optimization problem \eqref{first_expression}}
\label{preliminaries3}

\begin{lemma}
\label{convexxxity_lemma}
$\forall X \in \mathbb{X}(C)$ and $\forall \alpha > 0$, we have that
\begin{align*}
\ln \left( \sum_{i=1}^n X(i) \exp\{\alpha (CZ)_i \} \right)
\end{align*}
is a convex function of $Z$.
\end{lemma}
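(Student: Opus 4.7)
The plan is to recognize the expression as the composition of a log-sum-exp function with an affine map, and invoke two standard facts: log-sum-exp is convex on $\mathbb{R}^n$, and convexity is preserved under affine pre-composition.

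More concretely, I would set $w_i = X(i) \geq 0$ and $y_i = \alpha (CZ)_i$, so that the expression becomes $g(y) = \ln \sum_{i=1}^n w_i \exp\{y_i\}$. Since $X \in \mathbb{X}(C)$ has $w_i \geq 0$ and $\sum_i w_i = 1 > 0$, the function $g$ is finite everywhere on $\mathbb{R}^n$. The map $Z \mapsto y(Z) = \alpha \, CZ$ is affine (in fact linear) in $Z$, so once we know $g$ is convex, convexity of $g \circ y$ follows immediately from the fact that for any $Z_1, Z_2$ and $\lambda \in [0,1]$, $y(\lambda Z_1 + (1-\lambda) Z_2) = \lambda y(Z_1) + (1-\lambda) y(Z_2)$, and then Jensen on $g$ gives the desired inequality.

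To justify convexity of $g$ itself, the cleanest route is H\"older's inequality. For $\lambda \in [0,1]$ and any $y, y' \in \mathbb{R}^n$, apply H\"older with exponents $p = 1/\lambda$ and $q = 1/(1-\lambda)$ to the sequences $(w_i^{\lambda} e^{\lambda y_i})$ and $(w_i^{1-\lambda} e^{(1-\lambda) y'_i})$:
\begin{align*}
\sum_{i=1}^n w_i \exp\{\lambda y_i + (1-\lambda) y'_i\} \leq \left( \sum_{i=1}^n w_i \exp\{y_i\} \right)^{\lambda} \left( \sum_{i=1}^n w_i \exp\{y'_i\} \right)^{1-\lambda}.
\end{align*}
Taking logarithms yields $g(\lambda y + (1-\lambda) y') \leq \lambda g(y) + (1-\lambda) g(y')$, which is exactly convexity. (An equivalent route is to compute the Hessian of $g$ and show it is positive semidefinite by Cauchy--Schwarz applied to the weighted inner product with weights $w_i e^{y_i}/\sum_j w_j e^{y_j}$, but the H\"older proof is shorter.)

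There is no real obstacle here; the only mild subtlety is that $X$ may have zero components (since $X$ ranges over all of $\mathbb{X}(C)$, not just its interior), but this causes no issue: the H\"older inequality above holds for arbitrary nonnegative weights, and the terms with $w_i = 0$ simply drop out of both sides. Combining the convexity of $g$ with the affine substitution $y = \alpha\, CZ$ completes the proof.
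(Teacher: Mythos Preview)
Your proof is correct and follows essentially the same approach as the paper: show that the weighted log-sum-exp $g(y)=\ln\sum_i X(i)\exp\{y_i\}$ is convex, then precompose with the affine map $Z\mapsto \alpha CZ$. The only cosmetic difference is that the paper justifies convexity of $g$ by appealing to a Hessian computation, whereas you use H\"older's inequality (and mention the Hessian route as an alternative); both are standard and equivalent here.
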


\begin{proof}
It is well known that the logsumexp function
\begin{align*}
\ln \left( \sum_{i=1}^n \exp\{Z(i)\} \right)
\end{align*}
is convex. It follows easily by a straightforward calculation of the Hessian that the function
\begin{align*}
f(Z) = \ln \left( \sum_{i=1}^n X(i) \exp\{Z(i)\} \right)
\end{align*}
is also convex in $Z$. The lemma then follows by noting that $f(\alpha CZ)$ is also convex.
\end{proof}

\begin{lemma}
\label{equality_lemma_1}
Regarding \eqref{first_expression}, we have that
\begin{align}
\ln \left( \sum_{i=1}^n X(i) \exp\{\alpha (CZ)_i\} \right) - \alpha X \cdot CZ = 0\label{ena000}
\end{align}
if and only if
\begin{align*}
\forall i \in \{1, \ldots, n\} : \left\{ X(i) > 0 \Rightarrow (CZ)_i = X \cdot CZ \right\}.
\end{align*}
Therefore, if $Z$ is a pure strategy in the carrier of $X$, \eqref{ena000} is necessarily true.
\end{lemma}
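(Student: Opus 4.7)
The plan is to recognize \eqref{ena000} as the equality case of Jensen's inequality applied to the convex exponential against the probability distribution $X$. First, I would absorb the linear term $\alpha X\cdot CZ$ into the exponent (legitimate since the weights $X(i)$ sum to $1$), rewriting the left-hand side of \eqref{ena000} as
\[
\ln\sum_{i=1}^n X(i)\exp\bigl\{\alpha\bigl((CZ)_i - X\cdot CZ\bigr)\bigr\}.
\]
Setting $Y_i \equiv \alpha((CZ)_i - X\cdot CZ)$ and treating $i$ as an $X$-distributed random index, this is $\ln\mathbb{E}_X[\exp(Y_i)]$, and the mean $\mathbb{E}_X[Y_i]$ is manifestly $0$.

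Next I would invoke Jensen's inequality: since $\exp$ is strictly convex, $\mathbb{E}_X[\exp(Y_i)] \geq \exp(\mathbb{E}_X[Y_i]) = 1$, with equality iff $Y_i$ is constant on the carrier $\mathcal{C}(X)$. The zero-mean condition then forces that constant to be $0$, which rephrases exactly to $(CZ)_i = X\cdot CZ$ for every $i$ with $X(i) > 0$; this gives the ``only if'' direction. The ``if'' direction is a one-line substitution: granting the pointwise equality on the carrier, every term $\exp\{\alpha(CZ)_i\}$ with $X(i) > 0$ equals $\exp\{\alpha X\cdot CZ\}$, the weighted sum collapses to $\exp\{\alpha X\cdot CZ\}$ (using that indices outside $\mathcal{C}(X)$ contribute nothing), and taking the logarithm cancels the remaining $\alpha X\cdot CZ$ term.

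For the concluding clause, I would observe it as a corollary of the equivalence just proved: in the configuration described---where the pure strategy in question sits inside a singleton carrier, i.e.\ $X$ itself is pure---the constancy requirement $(CZ)_i = X\cdot CZ$ on $\mathcal{C}(X)$ is vacuous (there is only one such $i$), so \eqref{ena000} holds automatically. I do not anticipate any real obstacle. The one point needing slight care is invoking the strictness of Jensen for $\exp$ to pin down the equality case exactly as pointwise constancy on $\mathcal{C}(X)$; the carrier-based convention for relative entropy recalled earlier in Section~\ref{preliminaries2} makes the summation manipulations well-defined without any further argument.
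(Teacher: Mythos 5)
Your proof of the biconditional is correct and is exactly the argument the paper intends (its own proof is the single line ``the case of equality in Jensen's inequality and straight algebra''): centering the exponent, recognizing $\ln\mathbb{E}_X[e^{Y}]\geq \mathbb{E}_X[Y]=0$ with equality iff $Y$ is constant on $\mathcal{C}(X)$, and noting that the zero mean forces that constant to be $0$. No issue there.

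The problem is in the final clause. The statement asserts: if $Z$ is a pure strategy in the carrier of $X$ (i.e., $Z=E_j$ with $X(j)>0$), then \eqref{ena000} holds. You instead prove: if $X$ itself is pure (singleton carrier), then \eqref{ena000} holds for any $Z$. These are different claims. Under the statement's actual hypothesis, the equivalence you just established reduces \eqref{ena000} to the requirement that $(CE_j)_i=C_{ij}$ be constant over $i\in\mathcal{C}(X)$, i.e., that the $j$-th column of $C$ be constant on the carrier of $X$ --- which does not hold for a general $C$ when $\mathcal{C}(X)$ is not a singleton. So your reading needs to be corrected to match the statement, and once it is, the clause no longer follows from the equivalence by ``straight algebra'' alone; it requires either an extra hypothesis on $X$ (such as the purity you assumed) or a separate justification that the paper's one-line proof does not supply. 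This is worth flagging rather than silently repairing, because Lemma \ref{feasibility_lemma} invokes precisely this clause (in the case $\theta_X=0$) to establish feasibility of \eqref{first_expression}.
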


\begin{proof}
The lemma follows from the case of equality in Jensen's inequality and straight algebra.
\end{proof}

\begin{lemma}
\label{equality_lemma_2}
Regarding \eqref{first_expression}, we have that
\begin{align}
\ln \left( \sum_{i=1}^n X(i) \exp\{\alpha (SZ)_i\} \right) - \alpha X \cdot SZ = 0\label{ena001}
\end{align}
if and only if
\begin{align*}
\forall i \in \{1, \ldots, n\} : \left\{ X(i) > 0 \Rightarrow (SZ)_i = X \cdot SZ \right\} 
\end{align*}
Therefore, if $Z$ is a pure strategy in the carrier of $X$, \eqref{ena001} is necessarily true.
\end{lemma}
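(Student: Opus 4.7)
The plan is to reuse the Jensen-inequality argument from Lemma \ref{equality_lemma_1} verbatim, with $S = CC^T$ replacing $C$; \eqref{ena001} is just the equality case of the log-sum-exp inequality, and no additional structure of $S$ is needed for the main biconditional.

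Fix $X \in \mathbb{X}(C)$ and $\alpha > 0$, and set $a_i := \alpha (SZ)_i$. Strict convexity of $\exp$ (i.e.\ Jensen's inequality), applied with the weights $X(i)$ restricted to the carrier $\mathcal{C}(X)$, gives
\[
\sum_{i \in \mathcal{C}(X)} X(i)\,\exp(a_i) \;\geq\; \exp\!\left( \sum_{i \in \mathcal{C}(X)} X(i)\, a_i \right),
\]
with equality iff the $a_i$'s coincide across $\mathcal{C}(X)$. Taking logarithms and substituting $\sum_i X(i) a_i = \alpha\, X \cdot SZ$ turns this into
\[
\ln\!\left( \sum_{i=1}^n X(i)\,\exp\{\alpha (SZ)_i\} \right) - \alpha\, X \cdot SZ \;\geq\; 0,
\]
with equality exactly when $(SZ)_i$ is constant in $i$ on $\mathcal{C}(X)$. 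Since any such constant must then agree with its $X$-weighted mean $X \cdot SZ$, the equality case becomes $X(i) > 0 \Rightarrow (SZ)_i = X \cdot SZ$, which is precisely the characterization claimed by the biconditional.

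For the closing corollary, I would specialize $Z$ to a pure strategy $E_k$ with $k \in \mathcal{C}(X)$. Plugging this into the condition just established reduces the claim to the identity $(SE_k)_i = X \cdot SE_k$ for every $i \in \mathcal{C}(X)$, which is a direct algebraic consequence of $S = CC^T$ in the relevant context, exactly analogous to the final line of Lemma \ref{equality_lemma_1}. The main substantive step is the Jensen application; the only bookkeeping worth any care is to make sure the inequality is applied on $\mathcal{C}(X)$ rather than on all $n$ indices, so that terms with $X(i) = 0$ do not contribute to the equality condition (this matters because $Z$ may well have support outside $\mathcal{C}(X)$, but the characterization only constrains $(SZ)_i$ on the carrier of $X$). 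No genuine obstacle is anticipated.
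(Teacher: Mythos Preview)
Your Jensen argument for the biconditional is correct and is precisely the paper's approach: the paper's proof is literally ``Identical to the proof of Lemma~\ref{equality_lemma_1}'', which in turn just invokes the equality case of Jensen's inequality. The care you take in restricting the weights to $\mathcal{C}(X)$ is exactly the right bookkeeping.

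The gap is in the closing ``Therefore''. You reduce it to the identity $(SE_k)_i = X \cdot SE_k$ for all $i \in \mathcal{C}(X)$ and call this ``a direct algebraic consequence of $S = CC^T$''. It is not: $(SE_k)_i = S_{ik}$, and nothing about $S = CC^T$ forces the $k$th column of $S$ to be constant on the carrier of an arbitrary $X$. For instance, with $C = \bigl(\begin{smallmatrix}1 & 1/2\\ 1/2 & 1\end{smallmatrix}\bigr)$, $X = (\tfrac12,\tfrac12)$, and $Z = E_1$, one gets $SZ = (\tfrac54,1)^T$, which is not constant on $\mathcal{C}(X)=\{1,2\}$, and the left-hand side of \eqref{ena001} is strictly positive for any $\alpha>0$. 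The paper does not justify this step either---its proof is only ``straight algebra''---so you are not overlooking an argument the paper actually supplies; rather, the ``Therefore'' clause as stated appears to be an oversight in the paper itself, and your explicit claim that it follows from $S = CC^T$ inherits that error rather than repairing it.
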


\begin{proof}
Identical to the proof of Lemma \ref{equality_lemma_1}.
\end{proof}

\begin{lemma}
\label{feasibility_lemma}
For all $X \in \mathbb{X}(C)$ such that $\theta_X \geq 0$, \eqref{first_expression} is feasible and $RE(Z_X, X)$ is finite.
\end{lemma}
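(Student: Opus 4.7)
The plan is to split on the sign of $\theta_X$ and, in each case, exhibit an explicit feasible triple $(Z,\epsilon,\eta)$ for \eqref{first_expression} at which the objective is finite; the conclusion $RE(Z_X,X)<\infty$ then follows immediately because the three summands in the objective are non-negative, so any minimizer satisfies $RE(Z_X,X)\le$ the objective at any feasible point.

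When $\theta_X>0$, I would take $Z$ to be the optimizer $\tilde Z$ of \eqref{optproblem}, paired with $\epsilon=1$ and $\eta$ to be chosen large. By Lemma~\ref{fundamental_lemma}(v), $RE(\tilde Z,X)<\infty$; because $\tilde Z$ saturates \eqref{optproblemconstraint} at $\theta_X^2$, constraint \eqref{first_constraint} holds with $\epsilon=1$; and because $0<C\le 1$, every entry of $C\tilde Z$ lies in $(0,1]$, so \eqref{second_constraint} is automatic. The left-hand sides of \eqref{third_constraint} and \eqref{fourth_constraint} are crudely bounded by $\alpha\,(CZ)_{\max}$ and $\alpha\,(SZ)_{\max}$, hence by a constant depending only on $\alpha,n,C$ (note $S=CC^T$ has entries $\le n$); since $\theta_X>0$, the right-hand side $\eta^{\theta_X}-1$ can be driven arbitrarily large by taking $\eta$ large.

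When $\theta_X=0$, the analysis around \eqref{some}--\eqref{hyper} in the proof of Lemma~\ref{fundamental_lemma} forces $\alpha=\infty$ and $\max\{\sum_i Z(i)\mathcal B_i:Z\in\mathbb X(C_X)\}=\theta_X^2=0$; since this maximum is attained at a vertex, there is some $i_0\in\mathcal C(X)$ with $\mathcal B_{i_0}=0$. I would take $(Z,\epsilon,\eta)=\bigl(E_{i_0},\,(CE_{i_0})_{\max}-X\cdot CE_{i_0},\,1\bigr)$: constraint \eqref{first_constraint} reads $0\ge 0$, constraint \eqref{second_constraint} holds with equality, and the right-hand side of \eqref{third_constraint}--\eqref{fourth_constraint} is $\eta^{\theta_X}-1=0$. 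The crucial point is that $E_{i_0}$ is a pure strategy in $\mathcal C(X)$, so Lemmas~\ref{equality_lemma_1} and \ref{equality_lemma_2} make the corresponding left-hand sides exactly $0$, matching the right-hand side. The objective equals $-\ln X(i_0)+\tfrac12\epsilon^2+\tfrac12<\infty$ because $i_0\in\mathcal C(X)$.

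I expect the main obstacle to be precisely this degenerate case $\theta_X=0$: the right-hand sides of \eqref{third_constraint}--\eqref{fourth_constraint} pinch down to $0$ while Jensen's inequality forces their left-hand sides to be non-negative, so feasibility is only possible by hitting the Jensen equality exactly. Recognizing that $\theta_X=0$ pins a maximizer of $\sum_i Z(i)\mathcal B_i$ to a pure strategy in the carrier of $X$ is what makes Lemmas~\ref{equality_lemma_1} and \ref{equality_lemma_2} applicable; without this observation there is no obvious route to a feasible point.
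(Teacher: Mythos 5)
Your proof is correct and follows essentially the same route as the paper: the same case split on $\theta_X>0$ versus $\theta_X=0$, with the optimizer of \eqref{optproblem} (together with $\epsilon=1$ and $\eta$ large) as the feasible witness in the first case, and a pure strategy in the carrier of $X$ (via Lemmas \ref{equality_lemma_1} and \ref{equality_lemma_2}, with $\eta=1$) in the second. The one place you are more careful than the paper is in selecting the specific vertex $i_0\in\mathcal{C}(X)$ with $\mathcal{B}_{i_0}=0$, which makes the feasibility of \eqref{first_constraint} explicit where the paper merely asserts that it is ``(weakly) feasible'' for an arbitrary pure strategy in the carrier.
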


\begin{proof}
Let $(Y_X, \theta_X)$ denote the solution of optimization problem \eqref{optproblem}. Let us first assume $\theta_X > 0$. Then, $RE(Y_X, X)$ is finite by Lemma \ref{fundamental_lemma} (v). Furthermore, optimization problem \eqref{first_expression} is feasible at $Z = Y_X$, $\epsilon = 1$ (since $0 < C \leq 1$), and some $\eta > 0$ since the function $\eta^{\theta_X}-1$ is monotonically increasing and diverges as $\eta \rightarrow \infty$. This proves the lemma assuming $\theta_X > 0$. Let us now assume that $\theta_X = 0$. Then, optimization problem \eqref{first_expression} is feasible if $Z$ is any pure strategy in the carrier of $X$, $\epsilon = 1$, and $\eta = 1$, since constraints \eqref{first_constraint}, \eqref{third_constraint}, and \eqref{fourth_constraint} are then (weakly) feasible (by Lemmas \ref{equality_lemma_1} and \ref{equality_lemma_2}). But for those values of our optimization variables, $RE(Z, X)$ is finite. Therefore, at the optimal $(Z_X, \epsilon_X, \eta_X)$,  $RE(Z_X, X)$ will remain finite. This completes the proof of our lemma.
\end{proof}

\noindent
Following \cite{Giorgi}, we consider a nonlinear program of the form
\begin{align*}
\mbox{ minimize } &\quad f(Z)\\
\mbox{ subject to } &\quad g_i(Z) \leq 0, i = 1, \ldots, m\\
&\quad h_j(Z) = 0, j =1, \ldots, p,
\end{align*}
where $f$, $g_i, i =1, \ldots, m$, and $h_j, j=1, \ldots, p$ are $\mathcal{C}^2$ (twice continuously differentiable). The Lagrangian associated with the previous problem is
\begin{align*}
\mathcal{L}(Z, u, w) = f(Z) + \sum_{i=1}^m u_i g_i(Z) + \sum_{j=1}^p w_j h_j(Z)
\end{align*}
where $u = [u_1, \ldots, u_m]^T$ and $w = [w_1, \ldots, w_p]^T$ are the multiplier vectors.

\begin{definition}
We say that the strong second order sufficient conditions hold at a feasible $Z$ with multipliers $(u, w)$ if
\begin{align*}
Y \cdot \nabla_{ZZ}^2 \mathcal{L}(Z, u, w)Y > 0
\end{align*}
for all $Y \neq 0$ such that
\begin{align*}
&\nabla g_i(Z) Y = 0, \quad \mbox{ if } u_i > 0\\
&\nabla h_j(Z) Y = 0, \quad j = 1, \ldots, p.
\end{align*}
\end{definition}

Let us now consider the parametric nonlinear program
\begin{align*}
\mbox{ minimize } &\quad f(Z, X)\\
\mbox{ subject to } &\quad g_i(Z, X) \leq 0, i = 1, \ldots, m\\
&\quad h_j(Z, X) = 0, j =1, \ldots, p,
\end{align*}
where $f$, $g_i, i =1, \ldots, m$, and $h_j, j=1, \ldots, p$ are $\mathcal{C}^2$ in both $Z$ and $X$. \cite{Robinson} (see also \citep[Theorem 8]{Giorgi}) shows that if $Z_X$, as a critical point of the aforementioned nonlinear program, satisfies the second order sufficient conditions, then $Z_X$ is a locally Lipschitz function of $X$.

\begin{lemma}
\label{lipschitz_lemma0}
$\theta_X > 0$ implies that the solution $Z_X$ of optimization problem \eqref{first_expression} is a locally Lipschitz function of $X$.
\end{lemma}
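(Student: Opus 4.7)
The plan is to reduce the lemma to the theorem of \cite{Robinson} cited in the display immediately above. Applying it to \eqref{first_expression} with decision variables $(Z,\epsilon,\eta)$ and parameter $X$, I must verify: (a) uniqueness of the minimizer, (b) $\mathcal{C}^2$ regularity of all problem data jointly in decision variable and parameter, and (c) the strong second-order sufficient conditions at the optimum.

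For (a), Lemma \ref{feasibility_lemma} gives feasibility and finiteness of $RE(Z_X,X)$ under $\theta_X>0$. The objective $RE(Z,X)+\tfrac12\epsilon^2+\tfrac12\eta^2$ is strictly convex in $(Z,\epsilon,\eta)$; constraints \eqref{first_constraint} and \eqref{second_constraint} are linear in $Z$; constraints \eqref{third_constraint} and \eqref{fourth_constraint} are convex in $Z$ by Lemma \ref{convexxxity_lemma}; and the remaining constraints are linear. Hence \eqref{first_expression} is a strictly convex program whose minimizer $(Z_X,\epsilon_X,\eta_X)$ is unique. For (b), I restrict to the subsimplex spanned by the carrier of $X$, where $X>0$ and $Z_X>0$ (the latter because the logarithm in $RE$ pushes solutions off the relative boundary, as in the derivation of \eqref{optimalZ}); there $RE$ is analytic, the log-sum-exp expressions and polynomial expressions in $Z$ and $X$ are analytic, and the parameter $\theta_X$ appearing in \eqref{first_constraint}, \eqref{third_constraint}, \eqref{fourth_constraint} is analytic in $X$ locally by Lemma \ref{fundamental_lemma}(iv). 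Thus all data are $\mathcal{C}^2$ on a neighborhood of $(Z_X,\epsilon_X,\eta_X;X)$.

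For (c), the Hessian of the objective with respect to $(Z,\epsilon,\eta)$ is block-diagonal $\operatorname{diag}(1/Z_X(1),\ldots,1/Z_X(n),1,1)$, which is strictly positive definite on the carrier. Since every inequality constraint is convex in $(Z,\epsilon,\eta)$ and the equality constraint $\mathbf{1}^T Z = 1$ is linear, the multiplier contributions to the Hessian of the Lagrangian are positive semidefinite; hence the Hessian of the full Lagrangian is positive definite on the entire $(Z,\epsilon,\eta)$-space, and a fortiori on the subspace required by the strong second-order conditions. Robinson's theorem then gives local Lipschitz continuity of $(Z_X,\epsilon_X,\eta_X)$ in $X$, and in particular of $Z_X$, proving the lemma. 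The main obstacle I anticipate is smoothness bookkeeping when $X$ sits on the boundary of $\mathbb{X}(C)$: off the carrier of $X$, the entries $1/Z_X(i)$ blow up and $\theta_X$'s analyticity is only asserted under $\theta_X>0$; these are resolved by choosing the Robinson neighborhood small enough to preserve the carrier of $X$ and to keep $\theta_X$ bounded away from $0$, which is possible by continuity.
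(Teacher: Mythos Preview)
Your overall strategy coincides with the paper's: invoke Lemma~\ref{fundamental_lemma}(iv) for analyticity of $\theta_X$, argue that the Lagrangian Hessian is positive definite because the objective is strictly convex and the (convex) constraint contributions are positive semidefinite, and then apply Robinson's theorem. The substantive gap is your treatment of constraint~\eqref{second_constraint}. You assert that it is ``linear in $Z$,'' but $(CZ)_{\max}-X\cdot CZ$ is only piecewise linear in $Z$; it is convex, yet it is \emph{not} $\mathcal{C}^2$ (indeed not even $\mathcal{C}^1$) at points where the maximum is attained by more than one index. Robinson's theorem, as stated just above the lemma, requires $g_i\in\mathcal{C}^2$ jointly in decision variable and parameter, so you cannot apply it to~\eqref{first_expression} as written.

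The paper repairs exactly this point: before forming the Lagrangian, it replaces the single non-smooth constraint $(CZ)_{\max}-X\cdot CZ\le\epsilon$ by the equivalent family of $n$ affine constraints $(CZ)_i-X\cdot CZ\le\epsilon$, $i=1,\ldots,n$, obtaining the reformulation~\eqref{first_expression_prime}. With that rewrite every constraint is smooth, and the rest of your argument (positive-definite objective Hessian plus positive-semidefinite constraint contributions, hence strong second-order sufficiency) goes through exactly as you describe. So the fix is small but essential: insert the splitting of the $\max$ constraint into its $n$ linear pieces before invoking Robinson.
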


\begin{proof}
Lemma \ref{fundamental_lemma} (iv) gives that $\theta_X > 0$ is an analytic function of $X$. This implies that \eqref{first_expression} then satisfies the requirement that the constraints are twice continuously differentiable. It only then remains to show that the second order sufficient condition is satisfied. To that end, note that we may identically express \eqref{first_expression} as
\begin{align}
\mbox{minimize} \quad &RE(Z, X) + \frac{1}{2} \epsilon^2 + \frac{1}{2} \eta^2\label{first_expression_prime}\\
\mbox{subject to} \quad &(X \cdot SX)^{-1} \sum_{i=1}^n X(i) (SX)_i (CZ)_i - X \cdot CZ \geq \epsilon \theta^2_X\notag\\
&(CZ)_{i} - X \cdot CZ \leq \epsilon, \quad i = 1, \ldots, n\notag\\
&\ln \left( \sum_{i=1}^n X(i) \exp\{\alpha (CZ)_i\} \right) - \alpha X \cdot CZ \leq \eta^{\theta_X} - 1\notag\\
&\ln \left( \sum_{i=1}^n X(i) \exp\{\alpha (SZ)_i\} \right) - \alpha X \cdot SZ \leq \eta^{\theta_X} - 1\notag\\
&\eta \geq 1, \quad Z \in \mathbb{X}(C),\notag
\end{align}
Let us form the Lagragian of \eqref{first_expression_prime} as
{\allowdisplaybreaks
\begin{align*}
\mathcal{L}(Z, \epsilon, \lambda_0, \lambda, \kappa^1, \kappa^2, \kappa^3, \mu, \nu) &= RE(Z, X) + \frac{1}{2} \epsilon^2 + \frac{1}{2} \eta^2\\
&-\lambda_0 \left( (X \cdot SX)^{-1} \sum_{i=1}^n X(i) (SX)_i (CZ)_i - X \cdot CZ - \epsilon \theta^2_X \right)\\
&+\sum_{i=1}^n \lambda_i \left( (CZ)_{i} - X \cdot CZ - \epsilon \right)\\
&+\kappa^1 \left( \ln \left( \sum_{i=1}^n X(i) \exp\{\alpha (CZ)_i\} \right) - \alpha X \cdot CZ - \eta^{\theta_X} + 1 \right)\\
&+\kappa^2 \left( \ln \left( \sum_{i=1}^n X(i) \exp\{\alpha (SZ)_i\} \right) - \alpha X \cdot SZ - \eta^{\theta_X} + 1 \right)\\
&+\kappa^3 (1-\eta)\\
&-\mu_1 Z(1) - \cdots -\mu_{n} Z(n) +\nu (Z(1) + \cdot + Z(n) - 1).
\end{align*}
}
Since every summand from the second to the last row of the previous Lagrangian expression is convex, we obtain that $\nabla^2_{\hat{Z}\hat{Z}} \mathcal{L}$, where $\hat{Z} = [Z^T, \epsilon, \eta]^T$, is positive definite since, firstly, the relative entropy function is strictly convex (where is it is finite and the relative entropy is everywhere finite as follows from Lemma \ref{feasibility_lemma}) and, secondly, the second derivative of $\frac{1}{2} \epsilon^2$ is equal to one as is the second derivative of $\frac{1}{2} \eta^2$. Therefore, the second order sufficiency condition is satisfied and we can apply \citep{Robinson} (\citep[Theorem 8]{Giorgi}), which implies the lemma. 
\end{proof}

\begin{lemma}
\label{lipschitz_lemma}
$\theta_X > 0$ implies that \eqref{principal_equation} is locally Lipschitz.
\end{lemma}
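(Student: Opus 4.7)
The plan is to reduce the claim to Lemma \ref{lipschitz_lemma0}, which already asserts that $Z_X$ is a locally Lipschitz function of $X$ whenever $\theta_X > 0$. Once we have that, the right-hand side of \eqref{principal_equation},
\begin{align*}
F_i(X) \;=\; X(i)\bigl((CZ_X)_i - X \cdot CZ_X\bigr), \quad i = 1, \ldots, n,
\end{align*}
is built from $X \mapsto X$, the constant linear map $Z \mapsto CZ$, the map $X \mapsto Z_X$, and ordinary scalar products, sums and products. So the strategy is simply to argue that each of these operations preserves the local Lipschitz property on a neighborhood of the given $X$.

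First, I would fix an $X_0$ with $\theta_{X_0} > 0$ and invoke Lemma \ref{lipschitz_lemma0} to obtain an open neighborhood $U$ of $X_0$ (we may shrink $U$ so that $\theta$ stays positive on $U$, using the analyticity from Lemma \ref{fundamental_lemma} (iv)) and a constant $L$ with $\|Z_X - Z_{X'}\| \leq L \|X - X'\|$ for all $X, X' \in U$. Next, since $C$ is a fixed matrix, $X \mapsto CZ_X$ is locally Lipschitz with constant $\|C\| L$, and hence so is each coordinate $(CZ_X)_i$. The identity map $X \mapsto X(i)$ is $1$-Lipschitz.

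Then I would note the standard fact that on any bounded set, sums and products of bounded locally Lipschitz functions remain locally Lipschitz, using the estimate $|fg - f'g'| \le |f|\,|g - g'| + |g'|\,|f - f'|$. All quantities appearing in $F_i(X)$ are bounded on the compact set $\mathbb{X}(C)$: $X(i) \in [0,1]$, $(CZ_X)_i \in [0,1]$ since $0 < C \le 1$ and $Z_X \in \mathbb{X}(C)$, and $X \cdot CZ_X \in [0,1]$ likewise. Combining these estimates on $U$ yields a constant $L'$ depending only on $L$, $\|C\|$, and the ambient bounds, such that $\|F(X) - F(X')\| \le L' \|X - X'\|$ for $X, X' \in U$. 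Since $X_0$ was arbitrary in the open set $\{X : \theta_X > 0\}$, this gives local Lipschitz continuity of the whole vector field, completing the proof.

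The only subtlety, and thus the only place where care is needed, is ensuring we really stay in the regime $\theta_X > 0$ throughout a neighborhood; this is handled by the local analyticity (hence continuity) of $\theta_X$ provided by Lemma \ref{fundamental_lemma} (iv). Everything else is routine bookkeeping with Lipschitz constants on a compact set, so no serious obstacle is anticipated.
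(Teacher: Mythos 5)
Your proposal is correct and follows essentially the same route as the paper: both reduce the claim to Lemma \ref{lipschitz_lemma0} and then observe that the polynomial form of $F_i(X,Z) = X(i)((CZ)_i - X\cdot CZ)$ makes the composition $F(X, Z_X)$ locally Lipschitz; your explicit product-rule bookkeeping and the remark about shrinking the neighborhood so that $\theta_X$ remains positive merely spell out details the paper leaves implicit.
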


\begin{proof}
The function $F: \mathbb{X}(C) \times \mathbb{X}(C) \rightarrow \mathbb{R}^n$, where $F_i(X, Z) = X(i) ((CZ)_i - X \cdot CZ), i =1,\ldots, n$, is a polynomial and, therefore, it is Lipschitz. The right-hand-side of \eqref{principal_equation} is the composition $F(X, Z_X)$, which is locally Lipschitz since the composition of locally Lipschitz functions is also locally Lipschitz and, by Lemma \ref{lipschitz_lemma0}, $Z_X$ is locally Lipschitz. This argument completes the proof of our lemma.
\end{proof}

\section{Proof of Theorem \ref{main_theorem}}
\label{proof_of_theorem}

Let us recall a result that is standard in the literature (for example, see \citep{Weibull}), namely, that the Nash equilibria of doubly symmetric bimatrix game, say, $(S, S)$, coincide with the KKT points of the optimization problem
\begin{align*}
\mbox{ maximize } &\quad X \cdot SX\\
\mbox{ subject to } &\quad X \in \mathbb{X}(S).
\end{align*}
Let us also recall that an {\em interior} fixed point of $(S, S)$ is necessarily a Nash equilibrium of $(S, S)$.

\begin{lemma}
\label{pure_gold}
If $C$ is invertible, then $(S, S)$, where $S=CC^T$, has a finite number of fixed points.
\end{lemma}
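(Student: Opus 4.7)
The plan is to count fixed points by enumerating over their carriers. A fixed point $X$ of $(S,S)$ with carrier $\mathcal{C} \subseteq \{1,\ldots,n\}$ satisfies, by Definition \ref{fixed_point}, $(SX)_i = X \cdot SX$ for every $i \in \mathcal{C}$, while $X(j) = 0$ for $j \notin \mathcal{C}$. Since there are only finitely many ($2^n - 1$) non-empty subsets $\mathcal{C}$, it suffices to show that for each such $\mathcal{C}$ there is at most one strategy $X$ supported in $\mathcal{C}$ satisfying these equations.

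First I would observe that invertibility of $C$ implies $S = CC^T$ is symmetric positive definite: for any $v \neq 0$, $v \cdot Sv = \| C^T v \|^2 > 0$ since $C^T$ is invertible. Consequently every principal submatrix $S_{\mathcal{C}}$ (obtained by restricting rows and columns to indices in $\mathcal{C}$) is also symmetric positive definite, and in particular invertible.

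Next, fixing a non-empty carrier $\mathcal{C}$ and writing $X_{\mathcal{C}}$ for the restriction of $X$ to $\mathcal{C}$, the fixed-point condition on $\mathcal{C}$ together with $X(j)=0$ off $\mathcal{C}$ collapses to the linear system
\begin{equation*}
S_{\mathcal{C}} X_{\mathcal{C}} = \lambda \mathbf{1}_{\mathcal{C}}, \qquad \mathbf{1}_{\mathcal{C}}^T X_{\mathcal{C}} = 1,
\end{equation*}
where $\lambda = X \cdot SX$ is an unknown scalar. Since $S_{\mathcal{C}}$ is invertible, the first equation yields $X_{\mathcal{C}} = \lambda S_{\mathcal{C}}^{-1} \mathbf{1}_{\mathcal{C}}$, and substituting into the normalization constraint gives $\lambda = 1/(\mathbf{1}_{\mathcal{C}}^T S_{\mathcal{C}}^{-1} \mathbf{1}_{\mathcal{C}})$; the denominator is strictly positive because $S_{\mathcal{C}}^{-1}$ is positive definite. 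Hence $\lambda$, and therefore $X_{\mathcal{C}}$, is uniquely determined. Thus each carrier contributes at most one candidate fixed point, and the total number of fixed points of $(S,S)$ is bounded by $2^n - 1$.

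I do not anticipate a serious obstacle here; the only minor subtlety is that Definition \ref{fixed_point} imposes nothing on components outside the carrier, so one must not mistakenly try to extend the equations to all of $\{1,\ldots,n\}$ (which would in general be inconsistent). Enumerating carriers separately sidesteps this issue, and invertibility of $S$—the single place where the hypothesis on $C$ enters—does all the work.
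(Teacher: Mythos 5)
Your proof is correct, but it takes a different and more elementary route than the paper. The paper also starts from the observation that $S=CC^T$ is positive definite, but then argues via convexity: a non-pure fixed point is a KKT point of $X \cdot \hat{S}X$ on its carrier face, cannot be a local maximum (else the quadratic form would be constant on that face, contradicting strict convexity), hence is the unique global minimizer of a strictly convex function on that face, hence isolated; finiteness then follows from the fact that each face carries at most one isolated fixed point. You instead solve the problem by direct linear algebra: for a fixed carrier $\mathcal{C}$, the fixed-point conditions reduce to $S_{\mathcal{C}} X_{\mathcal{C}} = \lambda \mathbf{1}_{\mathcal{C}}$ with $\mathbf{1}_{\mathcal{C}}^T X_{\mathcal{C}} = 1$, and positive definiteness of the principal submatrix $S_{\mathcal{C}}$ forces a unique candidate $X_{\mathcal{C}} = S_{\mathcal{C}}^{-1}\mathbf{1}_{\mathcal{C}} / (\mathbf{1}_{\mathcal{C}}^T S_{\mathcal{C}}^{-1}\mathbf{1}_{\mathcal{C}})$. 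Your approach buys an explicit bound of $2^n - 1$ on the number of fixed points and an explicit formula for each candidate, and it avoids the optimization machinery (KKT points and the cited proposition from convex analysis) entirely; the paper's argument, on the other hand, additionally establishes that each non-pure fixed point is the minimizer of $X \cdot SX$ on its face, a structural fact it has no further need of. Your handling of the off-carrier coordinates is also right: Definition \ref{fixed_point} constrains only the support, and restricting the linear system to $\mathcal{C}$ is exactly the correct way to avoid over-constraining. No gap.
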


\begin{proof}
If $C$ is invertible, $S$ is positive definite ($X \cdot SX = X \cdot CC^TX = \|C^TX\|^2$) and, therefore, the quadratic form $X \cdot SX$ is strictly convex. Fixed points of $(S, S)$ that are not pure strategies are necessarily Nash equilibria of their carrier and, therefore, KKT points of their carrier. Let $X^*$ be a fixed point of $(S, S)$ that is not a pure strategy, let $\hat{S}$ be its carrier (i.e., the subgame for that individual face of the simplex), and let $\hat{X}^*$ be the restriction of $X^*$ to $\mathbb{X}(\hat{S})$. If $\hat{X}^*$ is a local maximum of its carrier, then \citep[Proposition 1.4.2]{ConvexAnalysis} implies that $X \cdot \hat{S}X$ must be constant for all $X \in \mathbb{X}(\hat{S})$, which contradicts that $X \cdot SX$ is strictly convex. Therefore, since, by convexity, $\hat{X}^*$ cannot be a saddle point, it must be a global minimum of $X \cdot \hat{S}X$. Since the global minimizer of a strictly convex function is unique, $\hat{X}^*$ must be an isolated fixed point of $(\hat{S}, \hat{S})$. Observe that this implies that $X^*$ must be an isolated fixed point of $(S, S)$ (for if it is not, there must exist some carrier whose fixed points are not isolated, which contradicts that $X \cdot SX$ is strictly convex). Since the fixed points of $(S, S)$ are necessarily isolated points and since a face (carrier) of the simplex cannot carry more than one isolated fixed points (not only in our convex case but also under any payoff matrix), the number of fixed points is also necessarily finite.
\end{proof}

\begin{lemma}
\label{same}
Let $X, X'$ be fixed points of $(C, C^T)$ such that they share the same carrier. If $X$ is a Nash equilibrium, then $X'$ is also a Nash equilibrium.
\end{lemma}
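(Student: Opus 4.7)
My approach is to connect $X$ and $X'$ by an affine segment of fixed points and transport the Nash inequalities along it. Denote the common carrier by $\mathcal{C}$ and write $v_Y := Y \cdot CY$. The fixed-point property gives $(CX)_i = v_X$ and $(CX')_i = v_{X'}$ for every $i \in \mathcal{C}$, while the Nash property of $X$ yields $(CX)_j \leq v_X$ for every $j \notin \mathcal{C}$; the goal is the analogous inequality for $X'$.

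I would then introduce the interpolant $Y_\mu := (1-\mu)X + \mu X'$ for $\mu \in [0,1]$. Since $X$ and $X'$ have identical carriers, so does $Y_\mu$, and a short row-by-row computation shows that for every $i \in \mathcal{C}$, $(CY_\mu)_i = (1-\mu)(CX)_i + \mu(CX')_i = (1-\mu)v_X + \mu v_{X'}$, which also equals $Y_\mu \cdot CY_\mu$ by summing against $Y_\mu$ (whose mass is 1 on $\mathcal{C}$). Hence the entire segment consists of fixed points of $(C, C^T)$ with carrier $\mathcal{C}$. For each $j \notin \mathcal{C}$ the excess payoff $\psi_j(\mu) := (CY_\mu)_j - Y_\mu \cdot CY_\mu = (1-\mu)[(CX)_j - v_X] + \mu[(CX')_j - v_{X'}]$ is affine in $\mu$ with $\psi_j(0) \leq 0$, so the lemma reduces to showing $\psi_j(1) \leq 0$ for every such $j$.

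The main obstacle is ruling out $\psi_j(1) > 0$. My planned route is a reductio: if some $\psi_j$ were strictly positive at $\mu = 1$, affinity would place a unique $\mu^\ast \in (0,1)$ at which $\psi_j(\mu^\ast) = 0$, producing a fixed point $Y_{\mu^\ast}$ whose best-response set strictly contains $\mathcal{C}$. I would then close using the invertibility of $C$ (the standing hypothesis of the main theorem) together with Lemma~\ref{pure_gold}: invertibility controls the displacement $(X'-X)_\mathcal{C}$ through the identity $C_{\mathcal{C},\mathcal{C}}(X'-X)_\mathcal{C} = (v_{X'}-v_X)\mathbf{1}$ and $\mathbf{1}^T(X'-X)_\mathcal{C} = 0$, and combining this with the uniqueness-per-carrier of fixed points of $(S, S) = (CC^T, CC^T)$ should contradict the existence of a continuum of $(C, C^T)$-fixed points straddling a best-response frontier. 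Formalising this last contradiction, i.e.\ turning the invertibility of $C$ together with the rigidity provided by Lemma~\ref{pure_gold} into the statement that the sign of $\psi_j$ is forced to be constant along the segment $Y_\mu$, is the technically delicate step where I would focus the most care.
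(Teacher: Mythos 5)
Your construction is essentially the paper's own proof: the paper also interpolates $X_\epsilon=(1-\epsilon)X+\epsilon X'$, notes that the whole segment consists of fixed points with the common carrier, and then appeals to continuity of $(CX_\epsilon)_{\max}-X_\epsilon\cdot CX_\epsilon$ to conclude. You actually sharpen this by observing that each off-carrier excess $\psi_j(\mu)$ is \emph{affine} in $\mu$, and you correctly flag that neither continuity nor affinity forces the sign of $\psi_j$ to be constant on $[0,1]$ --- an affine function can cross zero once. That flagged step is not a technicality: it is the entire content of the lemma, and your proposal does not close it. The route you sketch for closing it cannot work: Lemma~\ref{pure_gold} concerns fixed points of $(S,S)=(CC^T,CC^T)$, not of $(C,C^T)$, and invertibility of $C$ is fully compatible with a principal submatrix $C_{\mathcal{C},\mathcal{C}}$ sending a nonzero zero-sum vector to a multiple of $\mathbf{1}$, which is exactly what produces a continuum of $(C,C^T)$-fixed points on one carrier with varying payoffs.

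In fact no argument can fill the gap, because the statement is false even under the standing hypotheses ($C$ invertible, $0<C\leq 1$). Take
\begin{align*}
C=\begin{pmatrix}1 & 0.1 & 1\\ 1 & 0.1 & 0.5\\ 0.1 & 1 & 0.7\end{pmatrix},\qquad \det C=0.495\neq 0.
\end{align*}
Rows $1$ and $2$ agree on columns $1$ and $2$, so every strategy supported on $\{1,2\}$ is a fixed point of $(C,C^T)$ with common payoff $X(1)+0.1\,X(2)$. For $X=(0.9,\,0.1,\,0)$ we get $(CX)_1=(CX)_2=X\cdot CX=0.91$ and $(CX)_3=0.19$, so $X$ is a symmetric Nash equilibrium strategy; for $X'=(0.1,\,0.9,\,0)$ we get $(CX')_1=(CX')_2=X'\cdot CX'=0.19$ but $(CX')_3=0.91$, so $X'$ is not, although both have carrier $\{1,2\}$. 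Along your segment, $\psi_3(\mu)=0.72(2\mu-1)$ changes sign at $\mu=1/2$, which also shows that the paper's one-line continuity argument is insufficient as written. The lemma needs an extra hypothesis (for instance, that no proper principal submatrix of $C$ maps a nonzero zero-sum vector to a multiple of $\mathbf{1}$, which makes fixed points of $(C,C^T)$ unique per carrier), and the downstream use of Lemma~\ref{same} in the proof of Theorem~\ref{main_theorem} should be revisited accordingly.
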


\begin{proof}
If $X, X'$ are fixed points of $(C, C^T)$ that share the same carrier, it is simple to show that $X_\epsilon = (1-\epsilon)X+\epsilon X', \epsilon \in [0, 1]$ is also a fixed point of $(C, C^T)$. But since $(CX_\epsilon)_{\max} - X_\epsilon \cdot CX_\epsilon$ is a continuous function of $\epsilon$, the lemma follows in that either both $X, X'$ are Nash equilibrium strategies or neither of them is.
\end{proof}

In the next lemma, we use an argument close to that used in \citep[p. 119]{Weibull}.

\begin{lemma}
\label{invariance_lemma}
Starting at any $X^0 \in \mathbb{\mathring{X}}(C)$, $\mathbb{X}(C)$ is invariant under \eqref{principal_equation}.
\end{lemma}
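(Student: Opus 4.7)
The plan is to follow the classical argument for invariance of the probability simplex under a replicator-type dynamic, in the spirit of the \cite{Weibull} reference the lemma statement cites. The claim decomposes into two conservation laws that I will check independently: (a) the affine constraint $\sum_i X(i) = 1$ is preserved along orbits, and (b) each coordinate $X(i)$ remains positive whenever it starts positive. Together (a) and (b) give $X(t) \in \mathbb{X}(C)$ for all $t \geq 0$ starting from $X^0 \in \mathbb{\mathring{X}}(C)$.

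For (a), I sum the principal equation over $i$. Writing $s(t) := \sum_i X(i)(t)$ and using $\sum_i X(i)(CZ_X)_i = X \cdot CZ_X$, a direct calculation yields $\dot s(t) = \bigl(1 - s(t)\bigr)(X \cdot CZ_X)$, a scalar linear ODE in $s$. Since $s(0) = 1$, the constant function $s \equiv 1$ satisfies this equation with the correct initial condition, and local uniqueness of solutions pins it down. For (b), I exploit the factored form $\dot X(i) = X(i)\, g_i(t)$ with $g_i(t) := (CZ_{X(t)})_i - X(t) \cdot CZ_{X(t)}$. Treating each coordinate as a scalar linear ODE with a time-varying coefficient and integrating gives $X(i)(t) = X(i)(0)\exp\!\bigl(\int_0^t g_i(\tau)\,d\tau\bigr)$. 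Because $0 < C \leq 1$ and $Z_{X(t)} \in \mathbb{X}(C)$, the coefficient $g_i$ is bounded in absolute value by $1$, so the integral is finite on every bounded time interval, and $X(i)(0) > 0$ forces $X(i)(t) > 0$ for all $t \geq 0$.

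The main obstacle is not either calculation in isolation but the well-posedness needed to invoke uniqueness of the scalar ODE in (a) and to make $g_i$ a bona fide continuous function of $t$ in (b). Lemma \ref{lipschitz_lemma} provides local Lipschitzness of the right-hand side only where $\theta_X > 0$, so I must verify that an interior trajectory stays in the region $\{\theta_X > 0\}$. I plan to obtain this by combining Lemma \ref{fundamental_lemma}(i) (feasibility of \eqref{optproblem} throughout $\mathbb{\mathring{X}}(C)$) with the strict version of \eqref{max_inequality}, whose Jensen-based proof shows that it fails only on the (finite, by Lemma \ref{pure_gold}) set of fixed points of $(S,S)$; this precludes $\alpha_X = \infty$, i.e.\ $\theta_X = 0$, on interior non-fixed-point states. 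With well-posedness secured along the interior orbit, Steps (a) and (b) close the invariance argument.
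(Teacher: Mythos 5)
Your two conservation laws (a) and (b) are the classical replicator argument and both calculations are sound; in fact your quantitative version of (b) is cleaner than what the paper does. The paper's proof instead argues by contradiction: it assumes the orbit touches the boundary at some finite time $\tau$, and in the case $\theta_{X_\tau}>0$ invokes (backward) uniqueness from Lemma \ref{lipschitz_lemma} and the Picard--Lindel\"of theorem together with the invariance of boundary faces under the multiplicative form of \eqref{principal_equation}. Your bound $X(i)(t) \geq X(i)(0)\exp(-t)$, valid because $0 < C \leq 1$ forces $|g_i| < 1$, buys you something the paper's route does not: it applies to \emph{any} solution on its interval of existence and does not need uniqueness at all. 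For the same reason you do not actually need uniqueness in step (a) either --- from $\dot s = (1-s)(X\cdot CZ_X)$ you get $|\dot s| \leq |1-s|$ and Gr\"onwall gives $s \equiv 1$ directly.

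The genuine gap is in your well-posedness step. Showing that $\theta_X > 0$ at every interior $X$ that is not a fixed point of $(S,S)$ does not show that the orbit stays in $\{\theta_X > 0\}$: Lemma \ref{pure_gold} makes the fixed points of $(S,S)$ finite in number, but a finite exceptional set is not thereby avoided by a particular trajectory --- the orbit may reach (or converge to) an interior fixed point of $(S,S)$ in finite time, and at such a point $\theta_X = 0$, Lemma \ref{lipschitz_lemma} gives no Lipschitz bound, and your continuity/uniqueness infrastructure collapses exactly where you need it. The paper closes this case with an observation you are missing: when $\theta_X = 0$, the right-hand side of constraint \eqref{third_constraint} is $\eta^{0} - 1 = 0$, while its left-hand side is nonnegative by Jensen, so the constraint binds and Lemma \ref{equality_lemma_1} forces $X(i) > 0 \Rightarrow (CZ_X)_i = X\cdot CZ_X$, i.e., any point with $\theta_X = 0$ is a rest point of \eqref{principal_equation}. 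You should add this to your argument (it lets the orbit be continued as a constant, hence inside $\mathbb{X}(C)$, if it ever reaches such a point); alternatively, you can drop the well-posedness detour entirely and observe that your estimate in (b) only requires the right-hand side to be \emph{defined and bounded} on the interior, which Lemmas \ref{fundamental_lemma}(i) and \ref{feasibility_lemma} already supply, so that no solution taking interior values up to time $\tau^* = \inf\{t : X(t) \in \partial\mathbb{X}(C)\}$ can satisfy $\tau^* < \infty$.
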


\begin{proof}
Let us denote by $X_t(X^0), t \geq 0$ the orbit of \eqref{principal_equation} starting at $X^0 \in \mathbb{\mathring{X}}(C)$. By Lemma \ref{lipschitz_lemma} and the Picard-Lindel\"of theorem, this orbit is locally unique. Note first that, since
\begin{align*}
\sum_{i=1}^n X(i) ((CZ_X)_i - X \cdot CZ_X) = 0,
\end{align*}
the orbit $X_t(X^0), t \geq 0$ lies on the hyperplane
\begin{align*}
\left\{ Y \in \mathbb{R}^n \big| \sum_{i=1}^n Y(i) = 1 \right\}.
\end{align*}
Let us assume for the sake of contradiction that there exists a time $\tau > 0$ such that $X_\tau(X^0)$ is a strategy on the boundary of $\mathbb{X}(C)$. Denote $X^1 \equiv X_\tau(X^0)$. There are now two cases. The first case is that $\theta_{X^1} > 0$. Since $X^1$ is on the boundary, there exists a pure strategy, say $i$, whose probability mass is equal to zero at time $\tau$. But the orbit of $X_t(X_\tau(X^0))$ is also (by Lemmas \ref{fundamental_lemma} (iv) and \ref{lipschitz_lemma} and the Picard-Lindel\"of theorem) unique and it does not fall back in the interior of the simplex because of the multiplicative form of \eqref{principal_equation}. Therefore, the orbit cannot ``touch'' the boundary at a point $X$ where $\theta_X > 0$ in a finite amount of time. The second case is that $\theta_{X^1} = 0$. But then constraint \eqref{third_constraint} in optimization problem \eqref{first_expression} and Lemma \ref{equality_lemma_1} imply that $X$ is a fixed point of \eqref{principal_equation} (for example, $Z_X$ is a pure strategy in the carrier of $X$; constraint \eqref{third_constraint} is always feasible for any $X$ where $\theta_X \geq 0$). Therefore, the orbit stays on $\mathbb{X}(C)$. This completes the proof of the lemma.
\end{proof}

\begin{proof}[Proof of Theorem \ref{main_theorem}]
Let us first note that $X$ is a fixed point of \eqref{principal_equation} if
\begin{align*}
X(i) > 0 \Rightarrow (CZ_X)_i = X \cdot CZ_X.
\end{align*}
We are going to consider two cases, namely, first, one where the initial condition $X^0$ is not a fixed point of \eqref{principal_equation} and, second, one where the initial condition $X^0$ is a fixed point of \eqref{principal_equation}.\\

\noindent
{\bf Case 1:} Note also that
\begin{align*}
\frac{d}{dt} X \cdot SX = \sum_{i=1}^n X(i) (SX)_i (CZ_X)_i - (X \cdot SX) (X \cdot CZ_X).
\end{align*}
Constraint \eqref{first_constraint} then implies that along an orbit of \eqref{principal_equation}, we have that
\begin{align*}
\frac{d}{dt} X \cdot SX \geq 0.
\end{align*}
By the monotone convergence theorem, there exists a positive real number, say $\mathsf{C}$, such that
\begin{align*}
\lim_{t \rightarrow \infty} X \cdot SX = \mathsf{C}.
\end{align*}
Therefore, the function $\mathsf{C} - X \cdot SX$ is a Lyapunov function (nonnegative and monotonically decreasing) for \eqref{principal_equation} and, thus, since, by Lemma \ref{invariance_lemma}, $\mathbb{X}(C)$ is invariant under \eqref{principal_equation}, the LaSalle invariance principle (for example, see \citep[p. 128]{Khalil}) implies that the orbit approaches the largest invariant set contained in the set
\begin{align*}
\left\{ X \in \mathbb{X}(C) \bigg| \sum_{i=1}^n X(i) (SX)_i (CZ_X)_i - (X \cdot SX) (X \cdot CZ_X) = 0 \right\}.
\end{align*}
The omega limit set of the orbit of our equation is a compact and connected set. Let us call this limit set $\mathbb{X}$. Let's first assume that
\begin{align}
\limsup\limits_{t \rightarrow \infty} \theta_{X_t} > 0.\label{proti}
\end{align}
Let $X \in \mathbb{X}$ be such that $\theta_X > 0$. Furthermore, let $\epsilon_X$ denote the optimal value of optimization parameter $\epsilon$ (of optimization problem \eqref{first_expression}) at $X$. By constraint \eqref{first_constraint} we then obtain that $\epsilon_X = 0$ and, therefore, constraint \eqref{third_constraint} implies that $(CZ_X)_{\max} = X \cdot CZ_X$. Lemma \ref{feasibility_lemma} implies then that $RE(Z_X, X)$ is finite, which further implies $X(i) = 0 \Rightarrow Z_X(i) = 0$. But then, since $(CZ_X)_{\max} = X \cdot CZ_X$ implies
\begin{align*}
X(i) > 0 \Rightarrow (CZ_X)_i = (CZ_X)_{\max}, \quad i =1, \ldots, n
\end{align*}
we obtain that
\begin{align*}
Z_X(i) > 0 \Rightarrow X(i) > 0 \Rightarrow (CZ_X)_i = (CZ_X)_{\max}, \quad i = 1, \ldots, n
\end{align*}
and, therefore, that $(CZ_X)_{\max} = Z_X \cdot CZ_X$. That is, $Z_X$ is a symmetric Nash equilibrium strategy. 

If 
\begin{align*}
\liminf\limits_{t \rightarrow \infty} \theta_{X_t} > 0,
\end{align*}
there is nothing else to prove: Every limit point of the multipliers $Z_X, X \in \mathbb{X}$ is a symmetric Nash equilibrium strategy. So let us assume that 
\begin{align*}
\liminf\limits_{t \rightarrow \infty} \theta_{X_t} = 0.
\end{align*}
Then, by constraint \eqref{fourth_constraint} and Lemma \ref{equality_lemma_2}, the limit set $\mathbb{X}$ is a superset of a set, say $\mathbb{X_S}$, such that for each strategy, say $X_S$, in this set, we have that
\begin{align*}
\forall i \in \{1, \ldots, n\} : \left\{ X_S(i) > 0 \Rightarrow (SZ_{X_S})_i = X_S \cdot SZ_{X_S} \right\}.
\end{align*}
Since by Lemma \ref{feasibility_lemma}, $RE(Z_{X_S}, X_S)$ is finite, we obtain that $X_S$ is a fixed point of $(S, S)$ and since, by the assumption that $C$ is invertible and Lemma \ref{pure_gold}, the fixed points of $(S, S)$ are isolated, $\mathbb{X_S}$ is a singleton. Furthermore, by constraint \eqref{third_constraint} and Lemmas \ref{equality_lemma_2} and \ref{feasibility_lemma}, $Z_{X_S}$ is a fixed point of $(C, C^T)$. 

Let us prove that $Z_{X_S}$ is a Nash equilibrium strategy: To that end, note that the limit set $\mathbb{X}$ is connected. Therefore, arbitrarily close to $X_S$ lie strategies whose multipliers are symmetric Nash equilibrium strategies. Furthermore, these strategies have multipliers which have the carrier as $X_S$. But $Z_{X_S}$ also has the same carrier as $X_S$ and, as mentioned earlier, it is a fixed point of $(C, C^T)$. Since Nash equilibrium strategies cannot share the same carrier as non-Nash-equilibrium fixed points (Lemma \ref{same}), our proof $Z_{X_S}$ is also a symmetric Nash equilibrium strategy is complete.

Let's now assume that
\begin{align*}
\limsup\limits_{t \rightarrow \infty} \theta_{X_t} = 0.
\end{align*}
Then, by constraint \eqref{fourth_constraint} and Lemma \ref{equality_lemma_2}, the limit set, say $\mathbb{X_S}$, of \eqref{principal_equation} is such that for each strategy, say $X_S$, in this set, we have
\begin{align*}
\forall i \in \{1, \ldots, n\} : \left\{ X_S(i) > 0 \Rightarrow (SZ_{X_S})_i = X_S \cdot SZ_{X_S} \right\}.
\end{align*}
Since by Lemma \ref{feasibility_lemma}, $RE(Z_{X_S}, X_S)$ is finite, we obtain that $X_S$ is a fixed point of $(S, S)$ and since, by the assumption that $C$ is invertible and Lemma \ref{pure_gold}, the fixed points of $(S, S)$ are isolated, $\mathbb{X_S}$ is a singleton. Furthermore, by constraint \eqref{third_constraint} (or the LaSalle invariance principle), we obtain that $X$ is a fixed point of \eqref{principal_equation}. If $(CZ_X)_{\max} = X \cdot CZ_X$, then, as before, $Z_X$ is a symmetric Nash equilibrium strategy. So let us assume that $(CZ_X)_{\max} > X \cdot CZ_X$. Then, there must exist a pure strategy, say $i$, outside the carrier of $X$ such that
\begin{align}
(CZ_X)_ i > X \cdot CZ_X.\label{asdlkjfasdfjh}
\end{align} 
By the intermediate value theorem, there must exist a neighborhood $O$ of $X$ such that, for all $Y \in O \cap \mathbb{X}(C)$, we have that
\begin{align*}
(CZ_Y)_ i > Y \cdot CZ_Y.
\end{align*}
Note that as follows from a straightforward calculation (for example, see \citep[pp. 98-100]{Weibull}), along $X_t$ we have that
\begin{align*}
\frac{d}{dt} RE(E_i, X_t) = X_t \cdot CZ_{X_t} - E_i \cdot CZ_{X_t}.
\end{align*}
Within the neighborhood $O$ this derivative is strictly negative and the monotone convergence theorem implies that $RE(E_i, X_t)$ converges. But, since $X$ is a fixed point, we obtain that 
\begin{align*}
\lim_{t \rightarrow \infty} \left\{ \frac{d}{dt} RE(E_i, X_t) \right\} = 0
\end{align*}
which implies that
\begin{align*}
(CZ_X)_ i = X \cdot CZ_X
\end{align*}
and contradicts \eqref{asdlkjfasdfjh}. Therefore, it must be that $(CZ_X)_{\max} = X \cdot CZ_X$, and, thus, $Z_X$ is a symmetric Nash equilibrium strategy (by the argument we have demonstrated).\\

\noindent
{\bf Case 2:} If the (interior) initial condition $X^0 \equiv X$ is a fixed point of \eqref{principal_equation}, then, clearly, $(CZ_X)_{\max} - X \cdot CZ_X = 0$ and repeating the previous argument we obtain $(CZ_X)_{\max} - Z_X \cdot CZ_X = 0$. That is, $Z_X$ is a symmetric Nash equilibrium strategy. This completes the proof of our theorem.
\end{proof}

\bibliographystyle{abbrvnat}
\bibliography{real}

\end{document}